\theoremstyle{plain}
\newtheorem{theoreme}{Théorème}
\newtheorem{prop}{Proposition}
\newtheorem{thmext}{Théorème}
\newtheorem{lemmeext}[thmext]{Lemme}
\theoremstyle{definition}
\newtheorem*{ack}{Remerciements}
\theoremstyle{remark}
\newtheorem*{remarque}{Remarque}
\newcommand{\e}{{\rm e}}
\newcommand{\dd}{{\rm d}}
\newcommand{\ee}{{\varepsilon}}
\newcommand{\bfN}{{\mathbf N}}
\newcommand{\bfR}{{\mathbf R}}
\newcommand{\bfC}{{\mathbf C}}
\newcommand{\bfUn}{{\mathbf 1}}
\newcommand{\vth}{{\vartheta}}
\newcommand{\vphi}{{\varphi}}
\newcommand{\lambdat}{{\widetilde \lambda}}
\newcommand{\card}{{\rm card\ }}
\renewcommand{\mod}[1]{({\rm mod\ }#1)}
\renewcommand\Re{\operatorname{\mathfrak{Re}}}
\title{Propriétés multiplicatives des entiers friables translatés}
\author{Sary Drappeau}
\date{\today}
\address{CRM - Université de Montréal \\ Pavillon André Aisenstadt \\ 2920 chemin de la Tour \\
Montréal, QC, H3T 1J4 \\ Canada}
\email{drappeaus@dms.umontreal.ca}
\begin{document}

%\baselineskip=17pt

% Mise en place du titre

\begin{abstract}
An integer is said to be~$y$-friable if its greatest prime factor~$P(n)$ is less than~$y$. In this paper, we study numbers of the shape~$n-1$
when~$P(n)\leq y$ and~$n\leq x$. One expects that, statistically, their multiplicative behaviour resembles that of all integers less than~$x$.
Extending a result of Basquin~\cite{Basquin2010}, we estimate the mean value over shifted friable numbers of certain arithmetic functions
when~$(\log x)^c \leq y$ for some positive~$c$, showing a change in behaviour according to whether~$\log y / \log\log x$ tends to infinity or not.
In the same range in~$(x, y)$, we prove an Erdös-Kac-type theorem for shifted friable numbers,
improving a result of Fouvry \& Tenenbaum~\cite{FT1996}.
The results presented here are obtained using recent work of Harper~\cite{HarperBV2012} on the statistical distribution of friable numbers
in arithmetic progressions.
\end{abstract}

\subjclass[2010]{Primary 11N25; Secondary 11N37}

\maketitle

\section{Introduction}

Un entier~$n$ est dit~$y$-friable si son plus grand facteur premier~$P(n)$ est inférieur ou égal à~$y$, avec la convention~$P(1)=1$.
On note~$S(x, y)$ l'ensemble des entiers inférieurs à~$x$ qui sont~$y$-friables, et~$\Psi(x, y):=\card S(x, y)$.
Il est intéressant d'étudier dans quelle mesure les propriétés multiplicatives des entiers friables
translatés, de la forme~$n-1$ où~$P(n)\leq y$, sont similaires en moyenne à celles des entiers normaux, c'est-à-dire pris dans leur globalité.
On pose~$S^*(x, y) := S(x, y) \smallsetminus \{1\}$. Dans ce travail, on présente deux résultats concernant la répartition des entiers friables
translatés, qui améliorent des estimations antérieures en faisant usage d'un article récent de Harper~\cite{HarperBV2012}.

On étudie d'abord le problème du calcul de la valeur moyenne de fonctions arithmétiques sur les friables translatés.
Cette question est abordée par Fouvry et Tenenbaum~\cite{FT1990Tit} pour le cas de la fonction~$\tau(n)=\sum_{d|n}1$,
et récemment par Loiperdinger et Shparlinski~\cite{LoipShpar} dans le cas de la fonction indicatrice d'Euler~$\vphi(n)$,
puis par Basquin~\cite{Basquin2010} qui améliore leurs résultats.
À toute fonction~$f : \bfN \to \bfC$ on associe la fonction~$\lambda$ définie par
\[ \lambda(n) := (f \ast \mu)(n) = \sum_{d|n} f(d) \mu\left(\frac{n}{d}\right) \]
où~$\mu$ désigne la fonction de Möbius, ainsi que la quantité
\[ R_f(x, y) := \frac{1}{\Psi(x, y)} \sum_{n \in S^*(x, y)} f(n-1) \]
définie pour~$2\leq y\leq x$. On pose~$u := (\log x)/ \log y$, ainsi que le domaine
\begin{equation}\label{dom_He}\tag{$H_\ee$} 2\leq \exp\{(\log_2 x)^{5/3+\ee}\} \leq y \leq x \end{equation}
où~$\log_k x$ désigne le $k$-ième itéré du logarithme évalué en~$x$. On s'intéresse au résultat suivant de Basquin~\cite{Basquin2010}.
\begin{thmext}\label{thm_basquin}
Pour toute fonction~$f:\bfN\to\bfC$ multiplicative vérifiant pour deux réels positifs~$B, \beta$ et tout~$n\in\bfN$
l'inégalité~$|\lambda(n)| \leq B n^{-\beta}$, on a l'estimation
\begin{equation}\label{estim_valmoy_basquin}
R_f(x, y) = R_f(x, x) + O_{B, \beta}\left(\frac{\log(u+1)}{\log y}\right) \qquad ((x, y) \in H_\ee).
\end{equation}
\end{thmext}
Sous l'hypothèse sur~$f$ de cet énoncé, le terme principal du membre de droite de~\eqref{estim_valmoy_basquin} converge lorsque~$x\to\infty$
vers la valeur moyenne de~$f$, qui s'exprime en fonction de~$\lambda$ grâce à la relation
\[ \lim_{x\to\infty}R_f(x, x) = \sum_{q\geq 1}\frac{\lambda(q)}{q}. \]
On établit ici une estimation de même nature que~\eqref{estim_valmoy_basquin},
valable pour des fonctions~$f$ plus générales et dans un plus grand domaine en~$(x, y)$.
On note~$u:=(\log x)/\log y$ et~$\alpha = \alpha(x, y)$ l'unique solution réelle positive à l'équation
\[ \log x = \sum_{p\leq y}\frac{\log p}{p^\alpha - 1} .\]
On a~$\alpha \in [0, 1]$ et~$1-\alpha \sim \log(u+1)/\log y$ lorsque~$\min\{x, y, u\}\to\infty$ avec~$(\log x)^2 \leq y$. Ainsi lorsque~$y=(\log x)^\kappa$ pour un certain~$\kappa\geq 2$ et~$x\to\infty$, on a~$\alpha\to1-1/\kappa$. On pose également pour tout~$\beta\in[0,1]$,
\begin{equation}\label{def_g_beta}
g_q(\beta) := \prod_{p|q} (1-p^{-\beta}).
\end{equation}
On note que~$g_q(1) = \vphi(q)/q$.

%%%%%%%%%%% ÉNONCÉ THM VAL MOYENNE %%%%%%%%%%%%%%%%%%%%%
%%%%%%%%%%%%%%%%%%%%%%%%%%%%%%%%%%%%%%%%%%%%%%%%%%%%%%%%

\begin{theoreme}\label{thm_val_moy}
Supposons que~$f:\bfN \to \bfC$ soit une fonction telle que pour certains réels~$B, \beta>0$ on ait
\begin{equation}\label{cond_lambda_dir} \sum_{q \geq 1} \frac{|\lambda(q)|}{q^{1-\beta}} \leq B, \end{equation}
et telle que l'\emph{une des deux conditions} suivantes soit vérifiée :
\begin{enumerate}
\item $|\lambda(n)| \leq B$ pour tout~$n\in\bfN$ et un certain~$B>0$ fixé,
\item $f$ est multiplicative, $\lambda$ l'étant alors également.
\end{enumerate}
Alors il existe~$c>0$ dépendant au plus de~$\beta$ telle que lorsque~$2\leq (\log x)^c\leq y\leq x$, on ait
\begin{equation}\label{estim_val_moy}
R_f(x, y) = \sum_{q \geq 1} \frac{\lambda(q)g_q(\alpha)}{\vphi(q)} + O_{B, \beta}\left(\min\Big\{\frac{1}{u},\frac{\log(u+1)}{\log y}\Big\}\right)
.\end{equation}
En particulier, on a dans le même domaine
\begin{equation}\label{estim_val_moy_bigy} R_f(x, y) = R_f(x, x) + O\Big(\frac{\log(u+1)}{\log y}\Big) .\end{equation}
\end{theoreme}
\begin{remarque}
Lorsque~$f$ est multiplicative, le terme principal du membre de droite de~\eqref{estim_val_moy} peut s'écrire
comme un produit eulérien :
\begin{equation}\label{tme_pcp_mult}
\sum_{q \geq 1} \frac{\lambda(q) g_q(\alpha)}{\vphi(q)} =
\prod_p\Big(1 + \frac{1-p^{-\alpha}}{1-p^{-1}} \sum_{\nu\geq 1} \frac{\lambda(p^\nu)}{p^\nu} \Big)
.\end{equation}
On rappelle que la quantité~$\alpha$ dépend implicitement de~$x$ et~$y$.

L'estimation~\eqref{estim_val_moy_bigy} correspond à l'approximation du terme principal de~\eqref{estim_val_moy}
par sa valeur au point~$\alpha=1$. Si cette valeur est non nulle, l'estimation~\eqref{estim_val_moy_bigy}
n'est un équivalent asymptotique que lorsque~$\alpha\to1$ c'est-à-dire~$\log y/\log_2x\to\infty$.
Par ailleurs le terme d'erreur de~\eqref{estim_val_moy} est~$O(\log_2 x/\sqrt{\log x})$ uniformément par rapport à~$y$.
\end{remarque}

%%%%%%%%%%%%%% ÉNONCÉ THM ERDÖS-KAC %%%%%%%%%%%%%%
%%%%%%%%%%%%%%%%%%%%%%%%%%%%%%%%%%%%%%%%%%%%%%%%%%

Une autre question concernant les friables translatés et étudiée par Fouvry et Tenenbaum~\cite{FT1996}
est leur nombre de facteurs premiers distincts. On note~$\omega(n)$ le nombre de facteurs premiers de~$n>1$, et~$\omega(1)=0$.
Posant pour tout~$t\in \bfR$ et~$2 \leq y \leq x$,
\begin{align*}\label{defs_EK}
\Phi(t) &:= \int_{-\infty}^t \e^{-v^2/2} \dd v / \sqrt{2\pi} \\
\Psi(x, y ; t) &:= \card \bigg\{ n \in S^*(x, y) \ \bigg|\ \frac{\omega(n-1)-\log_2 x}{\sqrt{\log_2 x}} \leq t \bigg\} ,
\end{align*}
Fouvry et Tenenbaum obtiennent l'estimation suivante.
\begin{thmext}\label{thm_ek_ft}
Soit~$A$ un réel positif. Lorsque~$t\in\bfR$ et~$\exp\{(\log x)/(\log_2 x)^A\} \leq y \leq x$, on a
\begin{equation}\label{estim_ek_ft}
\frac{\Psi(x, y ; t)}{\Psi(x, y)} = \Phi(t) + O\left(\frac{\log_3 x}{\sqrt{\log_2 x}}\right).
\end{equation}
\end{thmext}
Dans le cas~$y=x$ ceci découle du théorème d'Erdös-Kac ({\it cf.} par exemple le théorème III.4.15 de~\cite{Tene2007}).
On montre ici que cette estimation est valable dans un plus large domaine en~$(x, y)$.

\begin{theoreme}\label{thm_ek}
Il existe un réel~$c>0$ telle que l'estimation~\eqref{estim_ek_ft} soit valable uniformément
lorsque~$t \in \bfR$ et~$2 \leq (\log x)^c \leq y \leq x$.
\end{theoreme}

\begin{ack}
L'auteur est très reconnaissant à son directeur de thèse Régis de la Bretèche pour ses remarques et conseils durant la rédaction
de ce papier, ainsi qu'à Gérald Tenenbaum pour sa relecture et ses encouragements.
\end{ack}

\section{Théorème de Bombieri-Vinogradov pondéré pour les entiers friables}

On définit pour tous~$a, q\in\bfN$ et tous réels~$x, y$ avec~$2\leq y \leq x$
\[ \Psi(x, y ; a, q) := \card\{ n\in S(x, y) \mid n \equiv a\mod{q} \}, \]
\[ \Psi_q(x, y) := \card\{n \in S(x, y) \mid (n, q) = 1 \} ,\]
et on note
\[ H(u) := \exp\left\{\frac{u}{\log(u+2)^2} \right\} .\]
Un aspect important de l'étude des entiers friables est la question de l'uniformité des résultats en~$y$. Par exemple, Hildebrand~\cite{AH1984} a montré que l'hypothèse de Riemann est équivalente à l'assertion que pour tout~$\ee>0$,
on a
\begin{equation}\label{psi-rho} \Psi(x, y) \sim_\ee x\rho(u) \quad (x \to \infty, (\log x)^{2+\ee} \leq y \leq x) \end{equation}
où~$\rho$ est la fonction de Dickman, solution continue sur~$\bfR_+$ de l'équation
\[ u \rho'(u)+ \rho(u-1) = 0 \]
avec la condition initiale~$\rho(u) = 1$ pour~$u\in[0,1]$, avec la convention~$\rho(u)=0$ si~$u<0$.
Dans~\cite{Hild86}, Hildebrand montre que l'assertion~\eqref{psi-rho} est vraie lorsque~$x,y\to\infty$ avec~$(x, y) \in (H_\ee)$,
pour tout~$\ee>0$ fixé. L'exposant~$5/3$ dans la définition de~$(H_\ee)$ est lié à l'exposant~$2/3$ apparaissant
dans la région sans zéro de~$\zeta$ de Vinogradov-Korobov.

Les travaux de Hildebrand et Tenenbaum~\cite{TeneHild86} et La Bretèche et Tenenbaum~\cite{RBGT2005}, qui font usage de la méthode du col, élucident en partie le comportement de~$\Psi(x, y)$ et plus généralement~$\Psi_q(x, y)$, en dehors du domaine~$(H_\ee)$, notamment par le biais de résultats locaux : on peut établir un lien entre les valeurs de~$\Psi_q(x, y)$ et celles de~$\Psi(x, y)$
même pour des valeurs de~$y$ où aucune approximation régulière de~$\Psi(x, y)$ n'est connue. On a en particulier les deux résultats suivants.

\begin{lemmeext}\label{thm_rbgt}
\begin{enumerate}[(i)]
\item Lorsque~$x, y \in \bfR$ et~$m\in\bfN$ vérifient~$2\leq (\log x)^2 \leq y \leq x$, $P(m)\leq y$ et~$\omega(m) \ll \sqrt{y}$, on a
\begin{equation}\label{bt05-i}\Psi_m(x, y) = \Psi(x, y) g_m(\alpha) \left\{ 1 + O\left(\frac{E_m (1 + E_m)}{u}\right) \right\}\end{equation}
où, ayant posé~$\gamma_m := \log(\omega(m)+2)\log(u+1)/\log y$, $E_m = E_m(x, y)$ vérifie
\begin{equation}\label{def_Em} E_m \ll (\log u)^{-1}\big\{\exp(2\gamma_m)-1\big\} .\end{equation}

\item Soit~$\ee>0$. Lorsque~$m\in\bfN$ vérifie~$P(m)\leq y$ et~$\omega(m)\ll \sqrt{y}$, on a
\begin{equation}\label{bt05-ii}
\Psi_m(x, y) = \Psi(x, y)\Big\{\frac{\vphi(m)}m+O_\ee\Big(\frac{2^{\omega(m)}\log(u+1)}{\log y}\Big)\Big\} \qquad ((x, y)\in(H_\ee)).
\end{equation}
\end{enumerate}
\end{lemmeext}
\begin{proof}
La formule~\eqref{bt05-i} est un cas particulier du théorème~2.1 de~\cite{RBGT2005}. La formule~\eqref{bt05-ii} découle aisément des calculs
de Fouvry--Tenenbaum~\cite{FT1996} ; on en reprend ici la démonstration.
Posons pour~$t\in\bfR$ et~$2\leq y\leq x$,
\[ R_m(t) := \frac1t\card\{n\leq t, (n, m)=1\}, \qquad \Lambda_m(x, y) := \begin{cases} x\int_{-\infty}^\infty \rho(u-v)\dd R_m(y^v), & x\not\in \bfN, \\
\Lambda_m(x-0, y) & \text{sinon.}\end{cases} \]
L'estimation~(4.1) de~\cite{RBGT2005} montre qu'afin d'établir la formule~\eqref{bt05-ii} il suffit de montrer que
\begin{equation}\label{Lambdam-partiel}
\Lambda_m(x, y) = \frac{\vphi(m)}m\Psi(x, y) + O\Bigg(\Psi(x, y)\frac{2^{\omega(m)}\log(u+1)}{\log y}\Bigg)
\end{equation}
On suppose sans perte de généralité que~$x\not\in\bfN$. Il découle d'une intégration par parties (ou de la formule~(4.22) de~\cite{FT1996}
avec~$k=0$) que l'on a
\begin{equation}\label{Lambdam-final}
\frac{\Lambda_m(x, y)}x = \frac{\vphi(m)}m\rho(u)+\Big\{R_m(x)-\frac{\vphi(m)}m\Big\}
- \int_{0}^\infty \rho'(u-v)\Big\{\frac{\vphi(m)}m - R_m(y^v)\Big\}\dd v,
\end{equation}
où~$\rho'$ est définie sur~$\bfR$ comme la dérivée à droite de la fonction~$\rho$.
On a par une inversion de Möbius~$R_m(t) = \vphi(m)m^{-1} + O(2^{\omega(m)}t^{-1})$ pour~$t\geq 0$ ; en injectant cela ainsi que
l'estimation~(4.4) de~\cite{FT1996} dans le membre de droite de~\eqref{Lambdam-final}, on obtient l'estimation
\[ \Lambda_m(x, y) = \frac{\vphi(m)}m x\rho(u) + O\Big(x2^{\omega(m)}\Big\{\frac{\rho(u)\log(u+1)}{\log y}+\frac1x\Big\}\Big) \]
et on en déduit l'estimation voulue~\eqref{Lambdam-partiel} lorsque~$(x, y)\in(H_\ee)$, grâce au théorème de Hildebrand~\cite[theorem]{Hild86}.
\end{proof}

Considérons maintenant~$\Psi(x, y ; a, q)$. Cette quantité est nulle si~$d:=(a, q)$ n'est pas un entier~$y$-friable ; dans le cas contraire, on~$\Psi(x, y ; a, q) = \Psi(x/d, y ; a/d, q/d)$.
Ainsi on peut toujours se ramener au cas où~$(a, q)=1$. Dans ce cas on sait suite à des travaux de Soundararajan~\cite{soundararajan2008distribution}
précisés par Harper~\cite{harper2012paper} qu'il y a équirépartition dans une large mesure~: pour tout~$\ee>0$, la relation
\[ \Psi(x, y ; a, q) \sim_\ee \frac{\Psi_q(x, y)}{\vphi(q)} \]
est valable lorsque~$(a, q)=1$, $\log x/ \log q \to \infty$, $y\geq y_0(\ee)$ et~$q \leq y^{4\sqrt{e}-\ee}$.
Comme le mentionnent Granville~\cite{Granville} et Soundararajan~\cite{soundararajan2008distribution}, il paraît difficile d'améliorer la valeur~$4\sqrt{e}$ de l'exposant.
Il est cependant possible d'obtenir des résultats si l'on considère le problème plus faible de l'erreur moyenne de la répartition de~$S(x, y)$ dans les différentes classes modulo~$q$ pour~$q\leq Q$ avec~$Q$ de l'ordre d'une puissance de~$x$.
Les Théorèmes~\ref{thm_val_moy} et~\ref{thm_ek} découlent plus précisément de bonnes estimations pour la quantité
\[ \Delta(x, y ; Q) := \sum_{q \leq Q} \left| \Psi(x, y ; 1, q) - \frac{\Psi_q(x, y)}{\vphi(q)}\right| .\]
Fouvry et Tenenbaum~\cite{FT1996} donnent un panorama des résultats antérieurs et obtiennent pour tous réels positifs~$\ee, A$ fixés la majoration
\[ \Delta(x, y ; \sqrt{x}\exp\{-(\log x)^{1/3}\} ) \ll \Psi(x, y) H(u)^{-\delta} (\log x)^{-A} \]
lorsque~$\exp\{(\log x)^{2/3+\ee}\} \leq y \leq x$, pour un certain~$\delta = \delta(\ee, A)$.
Harper~\cite{HarperBV2012}, améliorant leurs résultats,
obtient pour tout~$A>0$ et~$Q\leq \sqrt{\Psi(x, y)}$, et pour deux constantes absolues~$c, \delta>0$ la majoration
\[ \Delta(x, y ; Q ) \ll_A \Psi(x, y) \{ y^{-\delta} + H(u)^{-\delta} (\log x)^{-A} \} + Q\sqrt{\Psi(x, y)} (\log x)^4 \]
lorsque~$(\log x)^c\leq y\leq x$. C'est cette majoration qui est à l'\oe uvre dans les Théorèmes~\ref{thm_val_moy} et~\ref{thm_ek}.

La proposition qui suit est une version pondérée de~\cite[theorem~1]{HarperBV2012}.
\begin{prop}\label{thm_harper_pds}
Soit~$\vth>0$. Il existe trois constantes~$c, \delta, \eta>0$ pouvant dépendre de~$\vth$
telles que lorsque~$2\leq (\log x)^{c} \leq y \leq x$ et~$1\leq Q \leq \sqrt{\Psi(x, y)}$, pour
toute fonction~$\lambda : \bfN \to \bfR_+$ vérifiant pour un certain~$B\geq 0$ les conditions suivantes :
\begin{enumerate}[(i)]
\item \label{hypo_lambda_facto} $\lambda(mn) \leq \lambda(m)\lambda(n) \qquad ((m, n)\in \bfN^2)$,
\item \label{hypo_lambda_moy} $\sum_{n \leq z} \lambda(n) \ll z (\log z)^B \qquad(z \geq 2)$,
\item \label{hypo_lambda_sing} $\lambda(n) \ll n^{1-\vth} \qquad(n \geq 1)$,
\end{enumerate}
et pour tout~$A>0$, on ait
\begin{equation}\label{majo_harper_plus}
\begin{aligned}
&\ \sum_{q \leq Q} \lambda(q) \max_{(a, q)=1} \left| \Psi(x, y ; a, q) - \frac{\Psi_q(x, y)}{\vphi(q)} \right| \\
= &\ O_B\left(\Psi(x, y) \big\{H(u)^{-\delta}(\log x)^{-A} + y^{-\delta} \big\} \right) \\
&\ + O\Big((\log x)^6\sqrt{\Psi(x, y)}\Big\{Q + \sqrt{\Psi(x, y)}x^{-\eta/2}\Big\} \max_{q \leq Q} \lambda(q) \Big).
\end{aligned}
\end{equation}
De plus, lorsque~$Q\leq x^\eta$, le membre de gauche de~\eqref{majo_harper_plus} est
\[ O_B\left(\Psi(x, y) \big\{H(u)^{-\delta}(\log x)^{-A} + y^{-\delta} \big\} \right). \]
Autrement dit, le deuxième terme d'erreur de~\eqref{majo_harper_plus} n'est à prendre en compte que lorsque~$Q> x^\eta$.
\end{prop}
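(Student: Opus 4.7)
L'id\'ee est de reprendre la d\'emonstration par Harper de \cite[Theorem~1]{HarperBV2012} en ins\'erant le poids $\lambda$ \`a chaque \'etape, en utilisant la sous-multiplicativit\'e (i) conjointement \`a la majoration en moyenne (ii) pour que les versions pond\'er\'ees de ses estimations restent essentiellement aussi fortes, et la majoration ponctuelle (iii) pour contr\^oler la contribution des modules o\`u $\lambda$ est anormalement grande.

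En posant $E(q,a) := \Psi(x,y;a,q) - \Psi_q(x,y)/\vphi(q)$, on commence par utiliser l'orthogonalit\'e des caract\`eres de Dirichlet puis on regroupe selon le conducteur primitif $q^*$ pour r\'eduire la somme pond\'er\'ee \`a une expression de la forme
\[
\sum_{1 < q^* \leq Q} \sum_{\chi^* \text{ prim.\ mod } q^*} \Lambda_{q^*}(Q) \cdot \Bigl| \sum_{n \in S(x,y)} \chi^*(n) \Bigr|,
\]
\`a des corrections de coprimalit\'e pr\`es trait\'ees par inversion de M\"obius, o\`u $\Lambda_{q^*}(Q) := \sum_{d \leq Q/q^*} \lambda(q^* d)/\vphi(q^* d)$. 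Par (i) on a $\lambda(q^* d) \leq \lambda(q^*)\lambda(d)$, ce qui, combin\'e \`a (ii) et \`a une sommation d'Abel, donne $\Lambda_{q^*}(Q) \ll \lambda(q^*)(\log Q)^{B+1}/\vphi(q^*)$. Le poids se scinde donc en un facteur ponctuel $\lambda(q^*)/\vphi(q^*)$ et une puissance born\'ee de $\log x$.

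Suivant la dichotomie de Harper, on s\'epare $q^* \leq Q$ en \emph{petits conducteurs} $q^* \leq x^\eta$ et \emph{grands conducteurs} $x^\eta < q^* \leq Q$. Pour les petits conducteurs, on reprend l'argument combinant r\'egion sans z\'ero de Vinogradov--Korobov et estimations de densit\'e log-free pour les fonctions $L$ de Dirichlet, d\'esormais pond\'er\'e par $\lambda(q^*)/\vphi(q^*)$ : comme $\sum_{q^* \leq x^\eta} \lambda(q^*)/\vphi(q^*) \ll (\log x)^{B+1}$ par (i) et (ii), ce poids est absorb\'e dans le facteur $(\log x)^{-A}$ en augmentant $A$, et (iii) assure la pr\'eservation de la d\'ecroissance en $y^{-\delta}$. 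Pour les grands conducteurs, on applique une in\'egalit\'e de grand crible pond\'er\'ee ; le poids $\lambda(q^*)$ n'\'etant majorable au pire que par $\max_q \lambda(q)$, ceci produit le second terme d'erreur de~\eqref{majo_harper_plus}. Lorsque $Q \leq x^\eta$, la plage des grands conducteurs est vide et seule subsiste la contribution de densit\'e de z\'eros, d'o\`u l'\'enonc\'e renforc\'e.

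L'obstacle principal est l'obtention d'une version pond\'er\'ee de l'argument de densit\'e de z\'eros de Harper sans d\'egrader le gain : il faut montrer qu'ins\'erer le poids $\lambda(q^*)/\vphi(q^*)$ dans sa majoration de $\sum_{q^*}\sum_{\chi^*}\bigl|\sum_n \chi^*(n)\bigr|$ n'ajoute qu'une puissance born\'ee de $\log x$ d\'ependant de $B$, et non un facteur $\max_q \lambda(q)$. Ceci exige l'utilisation de (ii) sous forme moyenne sur les conducteurs primitifs, tandis que (iii) doit servir \`a tronquer la contribution des conducteurs $q^*$ proches de $Q$, o\`u $\lambda(q^*)$ pourrait \emph{a priori} atteindre l'ordre $Q^{1-\vartheta}$. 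Une fois cette estimation pond\'er\'ee \'etablie, le r\'esultat s'en d\'eduit en combinant les deux contributions.
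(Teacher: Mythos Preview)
Your approach is essentially the same as the paper's: one inserts the weight~$\lambda$ into Harper's proof of~\cite[Theorem~1]{HarperBV2012}, splits according to whether the primitive conductor~$r$ exceeds~$x^\eta$ or not, treats~$r>x^\eta$ by the large sieve (majorising~$\lambda(q)$ trivially by~$\max_{q\leq Q}\lambda(q)$), and for~$r\leq x^\eta$ verifies that the zero-free region and log-free zero-density arguments still go through with only an extra power of~$\log x$. Your factorisation~$\Lambda_{q^*}(Q)\ll \lambda(q^*)(\log Q)^{B+1}/\vphi(q^*)$ via~(i) and~(ii), and the observation that the large-conductor range is empty when~$Q\leq x^\eta$, both match the paper.

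There is, however, one point where your description of the r\^ole of hypothesis~(iii) is off. You write that~(iii) ``doit servir \`a tronquer la contribution des conducteurs~$q^*$ proches de~$Q$'', but in the small-conductor range one already has~$r\leq x^\eta$, so no such truncation is relevant. The actual use of~(iii) is in the treatment of Harper's class~${\mathcal G}_2$ of ``exceptional'' characters (those whose~$L$-function has a zero with real part~$>1-\epsilon_0$ and bounded height). For such~$\chi^*\pmod r$, the bound on the character sum saves only a factor~$y^{-\epsilon}$ with~$\epsilon\asymp\epsilon_0$, while the log-free zero-density estimate bounds the number of such characters by a positive power of~$r$. When one multiplies by~$\lambda(r)/\vphi(r)$ and sums over~$r$, the resulting series diverges unless~$\lambda(r)\ll r^{1-\vth}$ with~$\vth$ large relative to~$\epsilon_0$. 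This is precisely why the paper replaces Harper's threshold~$\Re(s)>299/300$ in the definition of~${\mathcal G}_2$ by~$\Re(s)>1-\vth/300$, and takes~$\epsilon:=\min\{\vth/300,\,(10\log r)/\log y\}$; with this choice one obtains
\[
\sum_{R<r\leq 2R}\mathop{{\sum}^{\sharp}}_{\chi^*\!\!\pmod r}\frac{\lambda(r)}{\vphi(r)}
\ll R^{-\vth}(\log_2 R)\,(R^{102})^{(5/2)(\vth/300)}\ll R^{-\vth/10},
\]
which is what makes the weighted~${\mathcal G}_2$-contribution still~$\ll\Psi(x,y)y^{-\delta}$. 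Without this adjustment of the threshold in terms of~$\vth$, the argument as you sketched it would not close; once it is made, everything else is as you describe.
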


\begin{proof}
Le résultat énoncé dans~\cite[theorem~1]{HarperBV2012} correspond au cas particulier~$\lambda = \bfUn$.
Le cas général se montre par une méthode exactement identique, c'est pourquoi on se contente ici d'indiquer les modifications à apporter à la preuve de~\cite[theorem~1]{HarperBV2012}.

Le membre de gauche de~\eqref{majo_harper_plus} est majoré par
\begin{equation}\label{lhs_bvfriable}
\sum_{1<r\leq Q} \sum_{\substack{\chi^* \mod{r} \\ \chi^* \text{ primitif}}} \sum_{q \leq Q} \frac{\lambda(q)}{\vphi(q)}
\sum_{\substack{\chi \mod{q} \\ \chi \text{ induit par } \chi^*}} \left| \sum_{n \in S(x, y)} \chi(n) \right| .
\end{equation}
Pour tout~$\eta>0$ fixé, les calculs de Harper (voir la proposition~2 et le paragraphe~4.4 de~\cite{HarperBV2012}) montrent que
la contribution des indices~$r > x^\eta$ à la première somme de~\eqref{lhs_bvfriable} est
\[ O\left( (\log x)^{7/2} \sqrt{\Psi(x, y)} \left\{Q + x^{1/2-\eta} (\log x)^2 \right\} \max_{q \leq Q} \lambda(q) \right) \]
en majorant trivialement~$\lambda(q)$. Ceci fournit le second terme d'erreur de~\eqref{majo_harper_plus} en remarquant que~$\Psi(x,y)\gg x^{1-\eta}$
lorsque~$c$ est choisi suffisamment grand en fonction de~$\eta$.

Il s'agit ensuite de vérifier que la contribution des indices~$r \leq x^\eta$ à la première somme de~\eqref{lhs_bvfriable} vérifie la majoration
\[ \sum_{1<r\leq x^\eta} \sum_{\substack{\chi^* \mod{r} \\ \chi^* \text{ primitif}}} \sum_{q \leq Q} \frac{\lambda(q)}{\vphi(q)}
\sum_{\substack{\chi \mod{q} \\ \chi \text{ induit par } \chi^*}} \left| \sum_{n \in S(x, y)} \chi(n) \right|
\ll \Psi(x, y) \left( H(u)^{-c_2} + y^{-c_2} \right) .\]
Cela est l'analogue de la formule~(3.1) de \cite{HarperBV2012} ; il convient de modifier la preuve de Harper de la façon suivante. Tout d'abord, il faut changer la définition de~${\mathcal G}_2$ en remplaçant la condition~$\Re(s) >299/300$ par~$\Re(s) > 1-\vth/300$. Lors la majoration de la contribution des caractères de~${\mathcal G}_2$, il convient de poser
\[ \epsilon := \min\{\vth/300, (10\log r)/\log y \} \]
en vérifiant que ce choix de~$\epsilon$ vérifie encore~$40\log \log(qyH)/\log y\leq \epsilon$, quitte à augmenter la valeur de~$c$
(appellée $K$ dans la notation de~\cite{HarperBV2012}), et quitte à diminuer celle de~$\eta$ en fonction de~$\vth$.
Le reste des calculs sont valables avec le poids~$\lambda(q)$ grâce aux propriétés suivantes, qui découlent des hypothèses sur~$\lambda$ :
\begin{enumerate}
\item lorsque~$q = rs$, on a $\lambda(q)/\vphi(q) \leq \lambda(r) \lambda(s) / (\vphi(r) \vphi(s))$,
\item on a~$\sum_{s \leq Q} \lambda(s)/\vphi(s) \ll (\log Q)^{B+2}$ ainsi que $\sum_{r \geq R} \lambda(r)/r^2 \ll (\log R)^B/R$,
\item étant donné~$R\geq 1$, on a
\[ \sum_{R < r \leq 2R} \mathop{{\sum}^\sharp}_{\chi^* \mod{r}} \frac{\lambda(r)}{\vphi(r)}
\ll \frac{\log_2 R}{R^{\vth}} (R^{102})^{(5/2)(\vth/300)} \ll R^{-\vth/10}, \]
où la somme~$\sum^{\sharp}$ porte sur les caractères primitifs~$\chi^*$ modulo~$r$ tels que la série de Dirichlet associée~$L(s, \chi^*)$ ait au moins un zéro~$\rho = \beta + i\gamma$ avec~$\beta>1-\vth/300$ et~$|\gamma| \leq r^{100}$,
\item on a enfin pour tout~$r\in \bfN$,
\[ \sum_{\substack{q \leq Q \\ r | q}} \frac{\lambda(q)}{\vphi(q)} \sum_{d|q/r} \frac{1}{\sqrt{d}}
\ll \frac{\lambda(r)}{\vphi(r)} \sum_{d\leq Q/r} \frac{\lambda(d)}{\sqrt{d}\vphi(d)} \sum_{m\leq Q/(rd)} \frac{\lambda(m)}{\vphi(m)}
\ll (\log Q)^{B+2} \frac{\lambda(r)}{\vphi(r)} .\]
\end{enumerate}
Les facteurs additionnels provenant de ces modifications sont tous~$O((\log x)^{c_3})$ pour un certain réel~$c_3 = c_3(B) >0$,
et cela est absorbé dans le terme d'erreur quitte à diviser par~$2$ la valeur de~$\delta$.
\end{proof}

\section{Valeur moyenne de certaines fonctions arithmétiques}

On démontre dans cette section le Théorème~\ref{thm_val_moy}.
On se place dans un cadre qui regroupe les deux hypothèses possibles sur~$f$ :
on suppose qu'il existe une fonction~$\lambdat : \bfN \to \bfR_+$ telle que l'on ait
\begin{enumerate}[(i)]
\item $\forall n\in \bfN$, $|\lambda(n)| \leq B \lambdat(n)$,
\item $\forall (m,n)\in \bfN^2$, $\lambdat(mn) \leq \lambdat(m)\lambdat(n)$,
\item $\forall z\geq 2, \sum_{n\leq z} \lambdat(n) \leq B z (\log z)^B $,
\item $\lambdat(n) \leq B n^{1-\beta}$.
\end{enumerate}
Lorsque~$\lambda(n) \leq B$ (resp. $\lambda$ est multiplicative), le choix~$\lambdat = \bfUn$ (resp. $\lambdat = |\lambda|$)
est admissible. On rappelle que l'on dispose en plus de la majoration~\eqref{cond_lambda_dir}.

Soit~$c\geq2$ un réel. On suppose~$(\log x)^c\leq y\leq x$. On remarque tout d'abord que sous ces hypothèses,
\begin{equation}\label{lien-alpha-1}
\sum_{q\geq 1}\frac{\lambda(q)g_q(\alpha)}{\vphi(q)} = \sum_{q\geq 1}\frac{\lambda(q)}{q}
+ O_{B, \beta}\Big(\frac{\log(u+1)}{\log y}\Big).
\end{equation}
En effet, puisque~$\alpha\gg 1$, on a~$\sup_{\beta \in [\alpha, 1]} |g_q'(\beta)| \ll \log q$,
ainsi~$g_q(\alpha) = g_q(1) + O((\log q)(1-\alpha))$. L'estimation~\eqref{lien-alpha-1} découle alors de la
majoration~$\sum_q\lambda(q)\log q/\vphi(q)\ll_{B,\beta}1$ ainsi que de~\cite[formule~III.5.74]{Tene2007}.
Comme on a
\[ \frac1u\leq\frac{\log(u+1)}{\log y}\Leftrightarrow \log y\leq\{1/2+\ee(x)\}\sqrt{\log x\log_2 x} \]
pour une certaine fonction~$\ee(x) = o(1)$, on en conclut que l'estimation~\eqref{estim_val_moy}
implique~\eqref{estim_val_moy_bigy}, et que ces deux estimations sont équivalentes lorsque~$y\geq\exp\{\sqrt{\log x\log_2x}\}$.

On suppose toujours~$(\log x)^c\leq y\leq x$. On part de l'expression
\[ R_f(x, y) = \frac{1}{\Psi(x, y)} \sum_{q\geq 1} \lambda(q) \{ \Psi(x, y ; 1, q) - 1 \} \]
obtenue en écrivant~$f(n) = \sum_{q|n} \lambda(q)$ et en intervertissant les sommes. Posons
\[ Q := \left\lceil \Big(\frac{x(\log x)^2}{\Psi(x, y)}\Big)^{1/\beta}\right\rceil .\]
Lorsque~$x$ tend vers l'infini, on a~$\Psi(x, y) = x^{\alpha + o(1)}$,
donc~$Q = x^{(1-\alpha)/\beta + o(1)}$. Une majoration triviale ainsi que l'hypothèse~\eqref{cond_lambda_dir} fournissent
\[ \sum_{q > Q} \lambda(q) \{ \Psi(x, y ; 1, q) - 1 \} \ll x \sum_{q > Q} \frac{|\lambda(q)|}{q} \leq B x Q^{-\beta}
\leq B \frac{\Psi(x, y)}{(\log x)^2}. \]
On applique la Proposition~\ref{thm_harper_pds} avec le poids~$\lambdat$ pour~$\vth = \beta$.
Si~$\eta$ est le réel positif donné par cette proposition, quitte à supposer~$c$ suffisamment grand en fonction de~$\beta$,
on a~$Q \leq x^\eta$. Par ailleurs~$\sum_{q \leq Q} |\lambda(q)| \leq BQ$, on obtient donc
\[ \sum_{q \leq Q} \lambda(q) \left\{ \Psi(x, y ; 1, q) - \frac{1}{\vphi(q)} \Psi_q(x, y) \right\} \ll_{B}
\frac{\Psi(x, y)}{\log x} . \]
Ainsi,
\begin{equation}\label{eq_Rf_Psiq} R_f(x, y) = \sum_{q \leq Q} \frac{\lambda(q)}{\vphi(q)} \frac{\Psi_q(x, y)}{\Psi(x, y)}
+ O_{B}\left(\frac{1}{\log x} \right) .\end{equation}

Supposons tout d'abord~$y\leq\exp\{\sqrt{\log x\log_2 x}\}$, ainsi $1/u\ll\log(u+1)/\log y$.
L'hypothèse~\eqref{cond_lambda_dir} sur~$\lambda$ implique
\[ \sum_{q > u^{2/\beta}} \frac{\lambda(q)}{\vphi(q)}\frac{\Psi_q(x, y)}{\Psi(x, y)} \ll_{B, \beta} \frac{1}{u} \]
puisque~$1/\vphi(q) \ll_\beta q^{-1+\beta/2}$. Lorsque~$c$ est supposé assez grand en fonction de~$\beta$,
les conditions de l'estimation~\eqref{bt05-i} du Lemme~\ref{thm_rbgt} sont vérifiées avec~$m=q$ lorsque~$q\leq u^{2/\beta}$, et on a uniformément
\[ \Psi_q(x, y) = g_q(\alpha) \Psi(x, y) \left\{ 1 + O\left(\frac{E_q(1+E_q)}{u}\right) \right\} \]
où~$E_q$ vérifie~\eqref{def_Em}. On a~$\log(\omega(q)+1) = o(\log q)$ lorsque~$q\to\infty$, ainsi que $\log(u+2)/(\log y) \ll 1$.
On en déduit que lorsque~$q\to\infty$, $\gamma_q = o(\log q)$, d'où~$E_q = q^{o(1)}$, et ainsi
\[ \sum_{q \leq u^{2/\beta}} \frac{|\lambda(q)|E_q(1+E_q)}{\vphi(q)} \ll_{B, \beta} 1 .\]
On en déduit
\begin{align*}
R_f(x, y)& = \sum_{q \leq u^{2/\beta}} \frac{\lambda(q) g_q(\alpha)}{\vphi(q)} + O_{B, \beta}\left(\frac{1}{u}\right) \\
&=  \sum_{q \geq 1} \frac{\lambda(q) g_q(\alpha)}{\vphi(q)}  + O_{B, \beta}\left(\frac{1}{u}\right)
\end{align*}
grâce à l'hypothèse~\eqref{cond_lambda_dir},  ce qui montre l'estimation~\eqref{estim_val_moy} lorsque~$y\leq\exp\{\sqrt{\log x\log_2x}\}$.

On suppose maintenant lorsque~$y\geq\exp\{\sqrt{\log x\log_2x}\}$ et
on reprend l'étude précédente à partir de la formule~\eqref{eq_Rf_Psiq}. Pour tout~$q\in\bfN$, on note
\[ q_y := \prod_{\substack{p^\nu||q\\p\leq y}}p^\nu \]
le plus grand diviseur~$y$-friable de~$q$. On a~$\Psi_q(x, y)=\Psi_{q_y}(x, y)$ ainsi que $\omega(q_y)\ll \log x\leq \sqrt{y}$ lorsque~$q\leq x$, l'estimation~\eqref{bt05-ii} du Lemme~\ref{thm_rbgt} fournit donc
\[ R_f(x, y) = \sum_{q\leq Q}\frac{\vphi(q_y)\lambda(q)}{q_y\vphi(q)} + O_B\Big(\frac1{\log x}
+\sum_{q\in\bfN}\frac{2^{\omega(q)}\lambda(q)}{\vphi(q)}\frac{\log(u+1)}{\log y}\Big) .\]
On a~$2^{\omega(q)}q/\vphi(q)\ll_\beta q^\beta$, l'hypothèse~\eqref{cond_lambda_dir} sur~$\lambda$ montre que le terme d'erreur est majoré par~$O_{B,\beta}(\log(u+1)/\log y)$.
Par ailleurs, on remarque que
\[ \sum_{q\leq Q}\frac{\vphi(q_y)\lambda(q)}{q_y\vphi(q)} = \sum_{q\geq1}\frac{\lambda(q)}q
+O\Big(\sum_{q>\min\{Q, y\}}\frac{|\lambda(q)|}{\vphi(q)}\Big) \]
et le terme d'erreur est~$O_{B,\beta}(y^{-\beta/2}+Q^{-\beta/2}) = O_{B,\beta}(\log(u+1)/\log y)$ par construction de~$Q$.
Cela montre l'estimation~\eqref{estim_val_moy_bigy} lorsque~$y\geq\exp\{\sqrt{\log x\log_2 x}\}$.
L'estimation~\eqref{estim_val_moy} en découle d'après~\eqref{lien-alpha-1}.

\section{Théorème d'Erdös-Kác sur les friables translatés}

On démontre dans cette section le Théorème~\ref{thm_ek}. On reprend la preuve de Fouvry et Tenenbaum~\cite{FT1996}, avec deux changements
notables : le choix du paramètre~$Y$ et l'utilisation de la majoration~\eqref{majo_new_ft} \emph{infra}.
Lorsque~$y \geq \exp\{\log x / (\log_2 x)^2\}$, le corollaire~5 de~\cite{FT1996} s'applique,
on peut donc supposer~$y \leq \exp\{\log x / (\log_2 x)^2\}$. En particulier~$H(u)^{-\delta} \ll_{\delta} 1/\log x$.
La Proposition~\ref{thm_harper_pds} appliquée avec~$\vth = 1/2$ et~$\lambdat(n) = \tau(n)^3$
où~$\tau(n) = \sum_{d|n} 1$ (l'hypothèse~(ii) étant satisfaite avec~$B = 7$) fournit, pour un réel positif~$\delta$
et quitte à augmenter la valeur de~$c$, l'estimation
\begin{equation}\label{majo_harper_tau3}\begin{aligned}
&\ \sum_{q\leq Q} \tau(q)^3 \max_{(a,q)=1} \left| \Psi(x, y ; a, q) - \frac{\Psi_q(x, y)}{\vphi(q)} \right| \\
\ll &\ \Psi(x, y) \left( H(u)^{-\delta} + y^{-\delta} \right) + \sqrt{\Psi(x, y)} Q^{9/8} (\log x)^{6}
\end{aligned}\end{equation}
uniformément lorsque~$2\leq (\log x)^c \leq y \leq x$ et~$Q \leq \sqrt{\Psi(x, y)}$.
Il sera fait usage du résultat suivant, qui est issu de~\cite[Lemma]{Landreau1989}.
\begin{lemmeext}\label{lemme_landreau}
Pour tout~$n \in \bfN$ on a
\[ \tau(n) \ll \sum_{\substack{d|n, d \leq n^{1/3}}} \tau(d)^3 .\]
\end{lemmeext}
On pose~$Y := \exp\{(\log x)/(\log_2 x)^c\}$ et pour tout~$n\in\bfN$,
\[ \omega(n, Y) := \sum_{p|n, p\leq Y} 1. \]
Pour tout~$\kappa\in \bfR$, on a
\[ \card\{n \in S^*(x, y) \mid \omega(n-1) - \omega(n-1, Y) > \kappa \} \leq  2^{-\kappa} \sum_{n \in S^*(x, y)} 2^{\omega(n-1)-\omega(n-1, Y)} .\]
Or on a grâce au Lemme~\ref{lemme_landreau}
\[ 2^{\omega(n-1)-\omega(n-1, Y)} \leq \sum_{\substack{d|n-1\\ P^-(d) > Y}} 1 \ll \sum_{\substack{d|n-1, d \leq x^{1/3} \\ P^-(d) > Y}} \tau(d)^3 \]
où~$P^-(d)$ désigne le plus petit facteur premier de l'entier~$d>1$, avec la convention~$P^-(1) = \infty$.
Une interversion de sommation fournit
\begin{equation}\label{majo_supY}
\card\{n \in S^*(x, y) \mid \omega(n-1) - \omega(n-1, Y) > \kappa \} \ll
2^{-\kappa} \sum_{\substack{d \leq x^{1/3} \\ P^-(d)>Y}} \tau(d)^3 \Psi(x, y ; 1, d)
.\end{equation}
Quitte à augmenter la valeur de~$c$ pour avoir~$x^{3/4} \leq \Psi(x, y)/(\log x)^8$, la formule~\eqref{majo_harper_tau3}
avec~$Q = x^{1/3}$ et les calculs de~\cite{FT1996} (plus précisément la formule précédant la formule~(7.6))
montrent que le membre de droite de~\eqref{majo_supY} est
\[ \ll 2^{-\kappa} \sum_{\substack{d \leq x^{1/3} \\ P^-(d)>Y}} \frac{\tau(d)^3 \Psi_d(x, y)}{\vphi(d)}
 + O(2^{-\kappa} \Psi(x, y)) \ll 2^{-\kappa} (\log_2 x)^{8c} \Psi(x, y) .\]
Le choix~$\kappa = c_1 \log_3 x$ pour~$c_1>0$ fixé suffisamment grand en fonction de~$c$
assure que cela est~$\ll \Psi(x, y)/\sqrt{\log_2 x}$.

On pose~$\xi := \log_2 Y = \log_2x + O(\log_3 x)$ et
\[ \Psi^*(x, y ; t) := \card\{ n\in S^*(x, y) \mid \omega(n-1, Y) \leq \xi + t\sqrt{\xi} \} .\]
On a
\[ \card\{ n\in S^*(x, y) \mid \omega(n-1, Y) \leq \log_2 x + t\sqrt{\log_2 x} \} = \Psi^*( x, y ; \widetilde{t}\ ) \]
pour un certain réel~$\widetilde{t}$ dépendant de~$x$ et~$t$, vérifiant~$\widetilde{t} = t + O((1+|t|)\log_3x/\sqrt{\log_2x})$.
L'argument qui précède montre donc que pour un certain~$\kappa = O(\log_3 x)$ indépendant de~$t$, on a
\begin{equation}\label{Psi*_Psi}
\Psi^*(x, y ; \widetilde{t} - \kappa/\sqrt{\log_2 x}) + O\bigg(\frac{\log_3 x}{\sqrt{\log_2 x}}\Psi(x, y)\bigg) \leq \Psi(x, y ; t)
\leq \Psi^*(x, y ; \widetilde{t}\ )
\end{equation}
Puisque l'on a
\[ \max\{\big|\Phi(\widetilde{t})-\Phi(t)\big|, \big|\Phi(\widetilde{t}-\kappa/\sqrt{\log_2x}) - \Phi(t)\big|\} = O(\log_3x/\sqrt{\log_2 x}) \]
uniformément par rapport à~$t$, il suffit de montrer l'estimation~\eqref{estim_ek_ft}
avec~$\Psi(x, y ; t)$ remplacé par~$\Psi^*(x, y ; t)$.
De même que dans~\cite{FT1996}, on fait appel à l'inégalité de Berry-Esseen sous la forme énoncée dans~\cite[théorème~II.7.16]{Tene2007} :
\begin{equation}\label{berry_esseen}
 \sup_{t\in \bfR} \left| \frac{\Psi^*(x, y ; t)}{\Psi(x, y)} - \Phi(t) \right|
\ll \frac{1}{\sqrt{\xi}} + \int_0^{\sqrt{\xi}} |R(x, y ; \vth)| \frac{\dd \vth}{\vth}
\end{equation}
avec
\[ R(x, y ; \vth) := \bigg(\frac{1}{\Psi(x, y)} \sum_{n\in S^*(x, y)} \e^{i\vth(\omega(n-1,Y) - \xi)/\sqrt{\xi}}\bigg) - \e^{-\vth^2/2} \]
en remarquant que~$R(x, y ; -\vth) = \overline{R(x, y ; \vth)}$. On fixe~$\vth \in [0, \sqrt{\xi}]$.

Suivant les calculs de~\cite[formule~(7.10)]{FT1996}, on obtient
\begin{equation}\label{eglt_R}
R(x, y ; \vth) \ll \vth^2 + \vth \bigg(\frac{1}{\xi\Psi(x, y)}\sum_{n\in S^*(x, y)} (\omega(n-1, Y)-\xi)^2 \bigg)^{1/2}
.\end{equation}
On a
\[ \sum_{n\in S^*(x, y)} \omega(n-1, Y) = \sum_{p\leq Y} \Psi(x, y ; 1, p) + O(Y/\log Y) ,\]
\[ \sum_{n\in S^*(x, y)} \omega(n-1, Y)^2 = \sum_{p\leq Y} \Psi(x, y ; 1, p) + \sum_{\substack{p, q \leq Y \\ p\neq q}} \Psi(x, y ; 1, pq) + O(Y^2/\log^2 Y) .\]
Quitte à augmenter la valeur de~$c$, pour~$x$ assez grand on a~$Y \leq \sqrt{\Psi(x, y)}$, ainsi d'après~\cite[theorem~1]{HarperBV2012} on a
\[ \sum_{p\leq Y} \Psi(x, y ; 1, p) = \sum_{p \leq Y} \frac{\Psi_p(x, y)}{p-1} + O(\Psi(x, y)) = \xi \Psi(x,y) + O(\Psi(x, y)) \]
où l'on a utilisé~$\Psi_p(x, y) = \Psi(x, y)\{1+O(1/p^{\alpha} + 1/u) \}$ pour tout~$p\leq Y$, grâce à l'estimation~\eqref{bt05-i} si~$p\leq y$,
l'égalité étant triviale sinon. De même,
\[ \sum_{\substack{p, q \leq Y \\ p\neq q}} \Psi(x, y ; 1, pq) = \xi^2 \Psi(x, y) + O(\xi \Psi(x, y)) .\]
On a donc en développant,
\[ \sum_{n \in S^*(x, y)} (\omega(n-1, Y) - \xi)^2 = O(\xi \Psi(x, y)) \]
et en reportant cela dans~\eqref{eglt_R}, on obtient
\begin{equation}\label{estim_R1}
R(x, y ; \vth) \ll \vth + \vth^2.
\end{equation}

On montre ensuite une estimation plus précise que la précédente pour les valeurs de~$\vth$ loin de~$0$.
Pour tout~$m\in \bfN$, on a $\e^{i\vth\omega(m, Y)/\sqrt{\xi}} = \sum_{d|m, P(d) \leq Y} f_\vth(d)$ avec
\[ f_\vth(d) := \mu^2(d) (\e^{i\vth/\sqrt{\xi}}-1)^{\omega(d)} = O(\mu^2(d) \e^{-c_2 \omega(d)}) \]
pour un certain~$c_2>0$, puisque~$\vth/\sqrt{\xi} \leq 1 < \pi/3$.
Lorsque~$d$ est sans facteur carré, on a $d \leq P(d)^{\omega(d)}$ : on a donc pour tout~$m\in\bfN$
\begin{equation}\label{majo_new_ft}
\sum_{\substack{d|m, P(d) \leq Y \\ d>x^{1/3}}} f_\vth(d) \ll \e^{-c_3 \log x/\log Y} \tau(m) \ll \e^{-c_3(\log_2 x)^c} \sum_{d|m, d\leq m^{1/3}} \tau(d)^3
\end{equation}
pour un certain~$c_3>0$, où l'on a utilisé le Lemme~\ref{lemme_landreau}. Une interversion de sommation fournit
\begin{align*}
\sum_{n\in S^*(x, y)} \sum_{\substack{d|n-1, P(d) \leq Y \\ d>x^{1/3}}} f_\vth(d) \ll &\ \e^{-c_3(\log_2 x)^c} \sum_{d \leq x^{1/3}} \tau(d)^3 \Psi(x, y ; 1, d) \\
\ll &\ \e^{-c_3(\log_2 x)^c} \prod_{p \leq x}\Big(1+\frac 8 p\Big) \Psi(x, y) \ll \frac{\Psi(x, y)}{\log x}
.\end{align*}
D'autre part, étant donné que~$f_\vth(d)\ll 1$, le théorème~1 de~\cite{HarperBV2012}
(ou la Proposition~\ref{thm_harper_pds} pour~$\lambdat = \bfUn$) fournit, quitte à augmenter la valeur de~$c$,
\[ \sum_{d \leq x^{1/3}} f_\vth(d)\left\{ \Psi(x, y ; 1, d) - \frac{\Psi_d(x, y)}{\vphi(d)} \right\}
\ll \frac{\Psi(x, y)}{\log x} .\]
Ainsi on a
\begin{equation}\label{eg_R2_fin1}
\begin{aligned}
\sum_{n\in S^*(x, y)} \e^{i\vth\omega(n-1, Y)/\sqrt{\xi}} = &\
\sum_{d \leq x^{1/3}, P(d) \leq Y} f_\vth(d) \Psi(x, y ; 1, d) + O\left(\frac{\Psi(x, y)}{\log x}\right) \\
= &\ \sum_{d \leq x^{1/3}, P(d) \leq Y} \frac{f_\vth(d) \Psi_d(x, y)}{\vphi(d)}
+ O\left(\frac{\Psi(x, y)}{\log x}\right).
\end{aligned}
\end{equation}
La contribution à la dernière somme des~$d$ vérifiant~$\omega(d)\geq \sqrt{y}$ est majorée par
\[ O(\e^{-c_2\sqrt{y}} (\log x) \Psi(x, y)) = O(\Psi(x, y)/\log x) .\]
Notons temporairement~$m = m(d) := d_y$ le plus grand diviseur~$y$-friable de~$d$.
Lorsque~$\omega(d)\leq \sqrt{y}$, l'estimation~\eqref{bt05-i} du Lemme~\ref{thm_rbgt} fournit
\begin{equation}\label{estim_psi_coprime}
\Psi_d(x, y) = \Psi_m(x, y) = g_m(\alpha) \Psi(x, y)\left\{1 + O\left(\frac{E_m(1+E_m)}{u}\right)\right\}
\end{equation}
\[ \text{avec}\quad E_m = O\left((\log u)^{-1}\gamma_m\exp\{2\gamma_m\}\right) = O\left(\frac{\omega(d)}{\log y}\right) \]
et où~$g_m(\beta)$ est défini en~\eqref{def_g_beta}. On a en effet~$\gamma_m \leq \log(\omega(m)+2)/4$ quitte à supposer~$c$ suffisament grand.
On reporte l'estimation~\eqref{estim_psi_coprime} dans~\eqref{eg_R2_fin1} : le terme d'erreur induit est dominé par
\[ \frac{\Psi(x, y)}{u\log y} \sum_{P(d) \leq Y} \frac{|f_\vth(d)| \omega(d)^2}{\vphi(d)}
\ll \Psi(x, y)\frac{\log Y}{\log x} = \frac{\Psi(x, y)}{(\log_2 x)^c} .\]
On a donc
\begin{align*}
\sum_{n\in S^*(x, y)} \e^{i\vth\omega(n-1, Y)/\sqrt{\xi}} = &\
\Psi(x, y) \sum_{\substack{d\leq x^{1/3}, P(d)\leq Y \\ \omega(d)\leq \sqrt{y}}} \frac{f_\vth(d)g_{m(d)}(\alpha)}{\vphi(d)}
+ O\left(\frac{\Psi(x, y)}{\log_2 x} \right) \\
= &\ \Psi(x, y) \sum_{\substack{d\leq x^{1/3}, P(d)\leq Y}} \frac{f_\vth(d)g_{m(d)}(\alpha)}{\vphi(d)}
+ O\left(\frac{\Psi(x, y)}{\log_2 x} \right)
.\end{align*}
Par ailleurs on a
\[ \sum_{d>x^{1/3}, P(d) \leq Y} \frac{|f_\vth(d)|}{\vphi(d)} \ll \int_{x^{1/3}}^{\infty} \frac{d\Psi(z, Y)}{z}
\ll (\log Y) \e^{-(\log x)/6\log Y} \ll \frac{1}{\log x} .\]
On obtient donc
\[ \sum_{n\in S^*(x, y)} \e^{i\vth\omega(n-1, Y)/\sqrt{\xi}} = 
\Psi(x, y) \sum_{P(d)\leq Y} \frac{f_\vth(d)g_{m(d)}(\alpha)}{\vphi(d)}
+ O\left(\frac{\Psi(x, y)}{\log_2 x} \right) .\]
Quitte à supposer~$c$ assez grand on a~$\sum_p p^{-1-\alpha} \ll 1$,
et les calculs de~\cite{FT1996} (en particulier ceux menant à la formule~(7.16)) montrent que le terme principal du membre de droite vaut
\begin{equation*}
\begin{aligned}
\Psi(x, y) &\ \exp\{ (\e^{i\vth/\sqrt{\xi}} - 1)\xi + O(\vth/\sqrt{\xi}) \} \\
= &\ \begin{cases}
\Psi(x, y) \e^{i\vth \sqrt{\xi} - \vth^2/2} \left\{ 1 + O((\vth + \vth^3)/\sqrt{\xi}) \right\}
& \mbox{ si } 0\leq  \vth \leq \xi^{1/6} \\
O(\Psi(x, y) \e^{-c_4 \vth^2}) = O(\Psi(x, y)/\xi) & \mbox{ si } \xi^{1/6} \leq \vth \leq \sqrt{\xi}
\end{cases}
\end{aligned}
\end{equation*}
pour un certain~$c_4>0$, on a donc, en notant que~$\log_2 x \sim \xi$,
\begin{equation}\label{estim_R2}
\begin{aligned}
R(x, y ; \vth) \ll
\begin{cases}
\e^{-\vth^2/2} (\vth + \vth^3)/\sqrt{\xi} + 1/\xi & \mbox{ si } 0\leq  \vth \leq \xi^{1/6} \\
1/\xi & \mbox{ si } \xi^{1/6} \leq \vth \leq \sqrt{\xi}
\end{cases}
\end{aligned}
\end{equation}

En regroupant les estimations~\eqref{estim_R1} et~\eqref{estim_R2} on obtient
\[ R(x, y ; \vth) \ll
\begin{cases}
\vth & \mbox{ si } 0\leq \vth \leq 1/\xi \\
\e^{-\vth^2/2}(\vth + \vth^3)/\sqrt{\xi} + 1/\xi & \mbox{ si } 1/\xi \leq \vth \leq \sqrt{\xi} .
\end{cases} \]
En injectant dans~\eqref{berry_esseen}, cela fournit finalement
\[ \sup_{t\in\bfR} \left| \frac{\Psi^*(x, y ; t)}{\Psi(x, y)} - \Phi(t) \right| \ll \frac{1}{\sqrt{\xi}} .\]
Cela démontre le Théorème~\ref{thm_ek} grâce à l'estimation~\eqref{Psi*_Psi}.

% Bibliographie
% Syle de la bibliographie
\bibliographystyle{plain}
\bibliography{val_moyenne}

\begin{thebibliography}{10}

\bibitem{Basquin2010}
J.~{Basquin}.
\newblock Valeurs moyennes de fonctions multiplicatives sur les entiers
  friables translatés.
\newblock {\em Acta Arith.}, 145:285--304, 2010.

\bibitem{RBGT2005}
R.~de~la {Bretèche} and G.~{Tenenbaum}.
\newblock Propriétés statistiques des entiers friables.
\newblock {\em The Ramanujan Journal}, 9:139--202, 2005.

\bibitem{FT1990Tit}
E.~Fouvry and G.~Tenenbaum.
\newblock Diviseurs de {T}itchmarsh des entiers sans grand facteur premier.
\newblock {\em Analytic Number Theory}, pages 86--102, 1990.

\bibitem{FT1996}
E.~Fouvry and G.~Tenenbaum.
\newblock R{\'e}partition statistique des entiers sans grand facteur premier
  dans les progressions arithm{\'e}tiques.
\newblock {\em Proc. London Math. Soc.}, 3(3):481--514, 1996.

\bibitem{Granville}
Andrew Granville.
\newblock Integers, without large prime factors, in arithmetic progressions.
  {II}.
\newblock {\em Phil. Trans. Royal Soc. London}, 345(1676):349--362, 1993.

\bibitem{HarperBV2012}
A.~J. {Harper}.
\newblock {B}ombieri-{V}inogradov and {B}arban-{D}avenport-{H}alberstam type
  theorems for smooth numbers.
\newblock {\em pré-publication}, 2012.

\bibitem{harper2012paper}
A.~J. {Harper}.
\newblock On a paper of {K}. {S}oundararajan on smooth numbers in arithmetic
  progressions.
\newblock {\em J. Number Theory}, 132(1):182--199, 2012.

\bibitem{AH1984}
A.~{Hildebrand}.
\newblock {Integers free of large prime factors and the {R}iemann hypothesis}.
\newblock {\em Mathematika}, 31(02):258--271, 1984.

\bibitem{Hild86}
A.~{Hildebrand}.
\newblock On the number of positive integers {$\leq x$} and free of prime
  factors {$>y$}.
\newblock {\em J. Number Theory}, 22(3):289--307, 1986.

\bibitem{TeneHild86}
A.~{Hildebrand} and G.~{Tenenbaum}.
\newblock On integers free of large prime factors.
\newblock {\em Trans. Amer. Math. Soc.}, 296(01):265--290, 1986.

\bibitem{Landreau1989}
B.~{Landreau}.
\newblock A new proof of a theoreme of van der {C}orput.
\newblock {\em Bull. London Math. Soc.}, 21:366--368, 1989.

\bibitem{LoipShpar}
S.~S. Loiperdinger and I.~E. Shparlinski.
\newblock On the distribution of the {E}uler function of shifted smooth
  numbers.
\newblock {\em Colloq. Math.}, 120(1):139--148, 2010.

\bibitem{soundararajan2008distribution}
K.~Soundararajan.
\newblock The distribution of smooth numbers in arithmetic progressions.
\newblock {\em Anatomy of Integers}, pages 115--128, 2008.

\bibitem{Tene2007}
G.~{Tenenbaum}.
\newblock {\em Introduction à la théorie analytique et probabiliste des
  nombres}.
\newblock Belin, troisième edition, 2007.

\end{thebibliography}

\end{document}